\newtheorem{thm}{Theorem}
\numberwithin{equation}{section}
\title{An optimal three-point eighth-order iterative method without memory for
solving nonlinear equations with its dynamics}
\author{Gunar Matthies $^a$\thanks{gunar.matthies@tu-dresden.de}
\and Mehdi Salimi $^{b,c}$\thanks{mehdi.salimi@tu-dresden.de} \and
Somayeh Sharifi $^{d}$\thanks{somayeh.sharifi@medalics.org} \and
Juan Luis Varona $^{e}$\thanks{jvarona@unirioja.es}\\[9pt]
\small
$^{a}$Institut f{\"u}r Numerische Mathematik, Technische Universit{\"a}t Dresden, Germany\\[-3pt]
\small
$^{b}$Center for Dynamics, Department of Mathematics, Technische Universit{\"a}t Dresden, Germany\\[-3pt]
\small
$^{c}$Department of Mathematics, Universiti Putra Malaysia, 43400 UPM Serdang, Selangor, Malaysia\\[-3pt]
\small
$^{d}$MEDAlics, Research \small Center at Universit\`{a} per
Stranieri Dante Alighieri, Reggio Calabria, Italy\\[-3pt]
\small $^{e}$Departamento de Matem{\'a}ticas y Computaci{\'o}n,
Universidad de La Rioja, Logro{\~n}o, Spain }
\date{}
\begin{document}

\maketitle

\begin{abstract}
We present a three-point iterative method without memory for
solving nonlinear equations in one variable. The proposed method
provides convergence order eight with four function evaluations
per iteration. Hence, it possesses a very high computational
efficiency and supports Kung and Traub's conjecture. The
construction, the convergence analysis, and the numerical
implementation of the method will be presented. Using several test
problems, the proposed method will be compared with existing
methods of convergence order eight concerning accuracy and basin
of attraction. Furthermore, some measures are used to judge
methods with respect to their performance in finding the basin of
attraction.
\medskip

\noindent \textbf{Keywords}: Optimal multi-point iterative
methods; Simple root; Order of convergence; Kung and Traub's
conjecture; Basins of attraction.
\medskip

\noindent \textbf{Mathematics Subject Classification}: 65H05, 37F10
\end{abstract}

\section{Introduction}
Solving nonlinear equations is a basic and extremely valuable tool
in all fields in science and engineering. One can distinguish
between two general approaches for solving nonlinear equations
numerically, namely, one-point and multi-point methods. The basic
optimality theorem shows that an analytic one-point method based on $k$
evaluations is of order at most $k$, see~\cite[\S\,5.4]{Traub}
or~\cite{Kung} for an improved proof. The Newton--Raphson method
$x_{n+1}:=x_n - \frac{f(x_n)}{f'(x_n)}$ is probably the most widely used
algorithms for finding roots. It requires two evaluations per iteration
step, one for $f$ and one for $f'$, and results in second order
convergence which is optimal for this one-point method.

Some computational issues encountered by one-point methods are overcome by
multi-point methods since they allow to achieve greater accuracy with the
same number of function evaluations. Important aspects related to these
methods are convergence order and efficiency. It is favorable to attain
with a fixed number of function evaluations per iteration step a
convergence order which is as high as possible. A central role in this
context plays the unproved conjecture by Kung and Traub~\cite{Kung} which
states that an optimal multi-point method without memory provides a
convergence order of~$2^k$ while using $k+1$ evaluations in each iteration
step. The efficiency index for a method with $k$ evaluations and convergence
order $p$ and $k$ evaluations is given by $E(k,p)=\sqrt[k]{p}$,
see~\cite{Ostrowski}. Hence, the efficiency of a method supporting Kung and
Traub's conjecture is $\sqrt[k+1]{2^k}$. In particular, an optimal method
with convergence order eight has an efficiency index
$\sqrt[4]{8}\simeq 1.68179$.

A large number of multi-point methods for finding simple roots of a
nonlinear equation $f(x)=0$ with a scalar function
$f: D\subset\mathbb{R} \to \mathbb{R}$ which is defined on
an open interval~$D$ (or $f: D\subset\mathbb{C} \to \mathbb{C}$
defined on a region $D$ in the complex plane~$\mathbb{C}$) have been developed
and analyzed for improving the convergence order of classical methods like
the Newton--Raphson iteration.

Some well known two-point methods without memory are described
e.g.\ in Jarratt~\cite{Jarratt}, King~\cite{King}, and
Ostrowski~\cite{Ostrowski}. Using inverse interpolation, Kung and
Traub \cite{Kung} constructed two general optimal classes without
memory. Since then, there have been many attempts to construct
optimal multi-point methods, utilizing e.g.\ weight functions, see
in particular~\cite{Babajee, Bi, Chun1, Lotfi2, Petkovic,
Sharifi1, Sharifi2, Sharifi3, Sharma1, Thukral, Wang}.

We will construct a three-point method of convergence order eight
which is free from second order derivatives, uses $4$ evaluations, and
provides the efficiency index $\sqrt[4]{8} \simeq 1.68179$.

A wide used criterion to judge and rank different methods for
solving nonlinear equations is the basin of attraction. We will
use two measures to assess the performance in finding the basin of
attraction~\cite{Varona}.

The paper is organized as follows. Section~\ref{sec:description}
introduces the new methods based on a Newton step and Newton's
interpolation. Moreover, details of the new method and the proof
of its optimal convergence order eight are given. The numerical
performance of the proposed method compared to other methods are
illustrated in Section~\ref{sec:examples}. We approximate and
visualize the basins of attraction in Section~\ref{sec:dynamic}
for the proposed method and several existing methods, both
graphically and by mean of introduced numerical performance
measures~\cite{Varona}. Finally, we conclude in
Section~\ref{sec:conclusion}.

\section{Description of the method and convergence analysis}
\label{sec:description}

We construct in this section a new optimal three-point method for
solving nonlinear equations by using a Newton-step and Newton's
interpolation polynomial of degree three which was also applied
in~\cite{Sharifi2}.

\paragraph{Method 1:}
The new method is given by
\begin{equation}
\label{a1}
\left\{
\begin{aligned}
y_{n} & := x_{n} - u_n(x_n), \\[1.2ex]
z_{n} & := x_{n} -
u_n(x_n) \left(1+\dfrac{f(y_n)}{f(x_n)}
+ \left(1+\dfrac{1}{1+u_n(x_n)}\right) \left(\dfrac{f(y_n)}{f(x_n)}\right)^2\right),
\\[1.2ex]
x_{n+1} & := z_{n} -
\dfrac{f(z_n)}{f[z_n,y_n]+(z_n-y_n)f[z_n,y_n,x_n]+(z_n-y_n)(z_n-x_n)f[z_n,y_n,x_n,x_n]},
\end{aligned}
\right.
\end{equation}
where $u_n(x_n)=\frac{f(x_{n})}{f'(x_{n})}$.
The standard notation for divided differences
in Newton's interpolation
\[
  g[t_{\nu},t_{\nu+1}, \dots, t_{\nu+j}]
  = \frac{g[t_{\nu+1}, \dots, t_{\nu+j}] - g[t_\nu, \dots,
  t_{\nu+j-1}]}{t_{\nu+j}-t_{\nu}},
\]
with $g[t_\nu] = g(t_\nu)$ and $g[t_{\nu},t_{\nu}] = g'(t_{\nu})$ are used.

The iteration method~\eqref{a1} and all forthcoming methods are applied for
$n = 0, 1, \dots$ where $x_0$ denotes an initial approximation of the simple
root~$x^{\ast}$ of the function $f$. The method~\eqref{a1} uses four
evaluations per iteration step, three for $f$ and one for $f'$. Note
that~\eqref{a1} works for real and complex functions.

The convergence order of method~\eqref{a1} is given in the following theorem.

\begin{thm}
\label{thm:main}
Let $f:D\subset \mathbb{R} \to \mathbb{R}$ be an
eight times continuously differentiable function with a simple
zero $x^{\ast}\in D$. If the initial point $x_{0}$ is sufficiently
close to $x^{\ast}$ then the method defined by \eqref{a1} converges
to $x^{\ast}$ with order eight.
\end{thm}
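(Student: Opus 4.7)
The plan is to track the error $e_n := x_n - x^*$ via a standard Taylor expansion of $f$ around the simple root $x^*$. I introduce the constants $c_k := f^{(k)}(x^*)/(k!\,f'(x^*))$ for $k \geq 2$, so that
\[
f(x_n) = f'(x^*)\bigl(e_n + c_2 e_n^2 + c_3 e_n^3 + \cdots\bigr),\qquad
f'(x_n) = f'(x^*)\bigl(1 + 2c_2 e_n + 3c_3 e_n^2 + \cdots\bigr).
\]
The argument then proceeds substep by substep, showing that each of the three stages of \eqref{a1} raises the order of the error on the current iterate: Newton gives order $2$, the weighted step gives order $4$, and the divided-difference step gives order $8$.

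First, the Newton step gives the classical relation $y_n - x^* = c_2 e_n^2 + O(e_n^3)$, so that $f(y_n) = f'(x^*)c_2 e_n^2 + O(e_n^3)$ and the ratio $\tau_n := f(y_n)/f(x_n)$ satisfies $\tau_n = c_2 e_n + O(e_n^2)$. Substituting these expansions together with $u_n = e_n - c_2 e_n^2 + O(e_n^3)$ into the formula for $z_n$ and expanding $1/(1+u_n)$ as a geometric series, one checks that the weight function
\[
W := 1 + \tau_n + \Bigl(1 + \tfrac{1}{1+u_n}\Bigr)\tau_n^2
\]
has been tuned so that $u_n W = e_n + O(e_n^4)$; therefore $z_n - x^* = O(e_n^4)$, with an explicit leading coefficient that is a polynomial in $c_2, c_3, c_4$.

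For the final substep the key observation is that the denominator in \eqref{a1} is precisely $P_3'(z_n)$, where $P_3$ is the cubic Hermite interpolant of $f$ at the multiplicity-weighted nodes $\{x_n, x_n, y_n, z_n\}$, written in Newton form as
\[
P_3(t) = f(z_n) + f[z_n,y_n](t-z_n) + f[z_n,y_n,x_n](t-z_n)(t-y_n) + f[z_n,y_n,x_n,x_n](t-z_n)(t-y_n)(t-x_n).
\]
Applying the standard Hermite remainder $f(t) - P_3(t) = \tfrac{f^{(4)}(\xi(t))}{24}(t-x_n)^2(t-y_n)(t-z_n)$ and differentiating at $t = z_n$, the contribution of $\xi'(t)$ is annihilated by the factor $(t-z_n)$, so
\[
f'(z_n) - P_3'(z_n) = \tfrac{f^{(4)}(\xi)}{24}(z_n-x_n)^2(z_n-y_n) = O(e_n^4),
\]
because $(z_n-x_n)^2 = e_n^2 + O(e_n^3)$ and $z_n - y_n = -c_2 e_n^2 + O(e_n^3)$.

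To close the argument, denote the denominator by $D_n$ and split
\[
e_{n+1} = (z_n - x^*) - \frac{f(z_n)}{D_n} = \Bigl[(z_n - x^*) - \frac{f(z_n)}{f'(z_n)}\Bigr] + \frac{f(z_n)\bigl(D_n - f'(z_n)\bigr)}{f'(z_n)\,D_n}.
\]
The first bracket is the residual of a pure Newton step taken from $z_n$ and is $O((z_n-x^*)^2) = O(e_n^8)$; the second term is the product of $f(z_n) = O(e_n^4)$ with $(D_n - f'(z_n))/(f'(z_n)D_n) = O(e_n^4)$, hence also $O(e_n^8)$. Combining these gives $e_{n+1} = O(e_n^8)$, as claimed. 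The conceptual ingredients are the identification of the denominator as a Hermite-interpolant derivative and the Newton-style decomposition at $z_n$; the true obstacle is the bookkeeping of Taylor coefficients required to verify that the specific weight function kills the coefficients of $e_n$, $e_n^2$, and $e_n^3$ in $z_n - x^*$, a task best handled with a computer algebra system.
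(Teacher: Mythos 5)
Your argument is correct, but it follows a genuinely different route from the paper. The paper's proof is a single brute-force computation: it Taylor-expands $f(x_n)$, $f'(x_n)$, $f(y_n)$, $f(z_n)$ about $x^{\ast}$, substitutes everything into \eqref{a1}, and reads off the explicit error equation \eqref{erreq}, $e_{n+1}=c_2^2(c_2+5c_2^2-c_3)(c_2^2-5c_2^3-c_2c_3+c_4)e_n^8+O(e_n^9)$. You instead modularize: (i) you verify (or rather assert) that the weighted second substep is fourth order, (ii) you identify the denominator of the third substep as $P_3'(z_n)$ for the cubic Hermite interpolant on the nodes $\{x_n,x_n,y_n,z_n\}$ --- this identification is exactly right, since the $(t-z_n)$ factors kill the remaining terms of $P_3'$ at $t=z_n$ --- and use the interpolation remainder to get $D_n-f'(z_n)=O\bigl((z_n-x_n)^2(z_n-y_n)\bigr)=O(e_n^4)$, and (iii) you split $e_{n+1}$ into a Newton residual at $z_n$ plus a correction term, each $O(e_n^8)$. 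What each approach buys: the paper's expansion yields the precise asymptotic error constant (which it later invokes to explain why COC can exceed eight for special $f$), whereas your argument gives only the bound $|e_{n+1}|\le C|e_n|^8$; in exchange, your proof is conceptual, avoids any eighth-order expansions, explains why the scheme was designed this way, and applies verbatim to any fourth-order predictor $z_n$ combined with this interpolation denominator. Two small points to tighten: the claim that the weight gives $u_nW=e_n+O(e_n^4)$ is true (the $e_n^2$ and $e_n^3$ coefficients do cancel, with the $1/(1+u_n)$ term only perturbing the $e_n^4$ coefficient), but it is the crux of step (i) and should be displayed rather than left to ``one checks''; and differentiating the remainder term $\frac{f^{(4)}(\xi(t))}{24}(t-x_n)^2(t-y_n)(t-z_n)$ tacitly assumes $\xi$ is differentiable --- this is harmless and standard, and can be made rigorous either by writing the remainder as $f[x_n,x_n,y_n,z_n,t](t-x_n)^2(t-y_n)(t-z_n)$ and differentiating the (smooth) divided difference, or by computing $E'(z_n)=\lim_{t\to z_n}E(t)/(t-z_n)$ directly.
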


\begin{proof}
Let $e_{n} := x_{n}-x^{\ast}$, $e_{n,y}:=y_{n}-x^{*}$,
$e_{n,z} := z_{n}-x^{\ast}$ and
$c_{n} := \frac{f^{(n)}(x^{\ast})}{n!\,f'(x^{*})}$ for $n\in \mathbb{N}$.
Using the fact that $f(x^{\ast})=0$, the Taylor expansion of $f$ at
$x^{\ast}$ yields
\begin{equation}
\label{a10}
f(x_{n}) = f'(x^{\ast}) \left(e_{n}
+ c_{2}e_{n}^{2}+c_{3}e_{n}^{3} + \cdots
+ c_{8}e_{n}^{8}\right) + O(e_n^{9})
\end{equation}
and
\begin{equation}
\label{a11}
f'(x_{n}) = f'(x^{\ast}) \left(1 +
2c_{2}e_{n}+3c_{3}e_{n}^{2}+4c_{4}e_{n}^{3} + \cdots
+ 9c_9e_n^8\right) + O(e_n^{9}).
\end{equation}
Therefore, we have
\begin{equation*}
\begin{split}
\frac{f(x_{n})}{f'(x_{n})}
&= e_{n}-c_{2}e_{n}^{2} + \left(2c_{2}^{2}-2c_{3}\right) e_{n}^{3}
+ \left(-4c_2^3+7c_2c_3-3c_4\right)e_n^4 \\
& \qquad + \left(8c_2^4-20c_2^2c_3+6c_3^2+10c_2c_4-4c_5\right)e_n^5 \\
& \qquad + \left(-16c_2^5+52c_2^3c_3-28c_2^2c_4
+ 17c_3c_4-c_2(33c_3^2-13c_5)\right)e_n^6 + O(e_n^{7}),
\end{split}
\end{equation*}
and
\begin{equation*}
\begin{split}
e_{n,y} = y_n-x^{\ast} & = c_{2}e_{n}^{2} + \left(-2c_2^2+2c_3\right)e_n^3
+ \left(4c_2^3-7c_2c_3+3c_4\right)e_n^4\\
& \qquad + \left(-8c_2^4+20c_2^2c_3-6c_3^2-10c_2c_4+4c_5\right)e_n^5\\
& \qquad + \left(16c_2^5-52c_2^3c_3+28c_2^2c_4-17c_3c_4
+ c_2(33c_3^2-13c_5)\right)e_n^6 + O(e_n^{7}).
\end{split}
\end{equation*}
We have
\begin{equation}
\label{a12a}
f(y_{n}) = f'(x^{\ast}) \left(e_{n,y} + c_{2}e_{n,y}^{2} + c_{3}e_{n,y}^{3}
+ \cdots + c_{8}e_{n,y}^{8}\right) + O(e_{n,y}^{9})
\end{equation}
by a Taylor expansion of $f$ at $x^{\ast}$.
By substituting \eqref{a10}--\eqref{a12a} into \eqref{a1}, we get
\begin{equation*}
\begin{split}
\label{a13a}
e_{n,z} = z_n-x^{\ast}
&= c_2 \left(c_2+5c_2^2-c_3\right)e_n^4 \\
& \qquad + \left(-8c_2^3-36c_2^4-2c_3^2+c_2^2(-1+32c_3) + c_2(4c_3-2c_4)\right)e_n^5 + O(e_n^6).
\end{split}
\end{equation*}
We obtain
\begin{equation}
\label{a12b}
f(z_{n}) = f'(x^{\ast}) \left(e_{n,z} +
c_{2}e_{n,z}^{2}+c_{3}e_{n,z}^{3}+\cdots+c_{8}e_{n,z}^{8}\right) + O(e_{n,z}^{9})
\end{equation}
by using again a Taylor expansion of $f$ at $x^{\ast}$.
Substituting \eqref{a10}--\eqref{a12b} into \eqref{a1}, we get
\begin{equation}
\label{erreq}
e_{n+1} = x_{n+1}-x^{\ast} =
c_2^2 \left(c_2+5c_2^2-c_3\right) \left(c_2^2-5c_2^3-c_2c_3+c_4\right)e_n^8 + O(e_n^9),
\end{equation}
which finishes the proof of the theorem.
\end{proof}

We will compare the new method~\eqref{a1} with some existing
optimal three-point methods of order eight having the same optimal
computational efficiency index equal to $\sqrt[4]{8} \simeq
1.68179$, see~\cite{Ostrowski, Traub}.

The existing methods that we are going to use to compare are the following:

\paragraph{Method 2:}
The method by Chun and Lee \cite{Chun1} is given by
\begin{equation}
\label{o1}
\left\{
\begin{aligned}
y_{n} & := x_{n} - \dfrac{f(x_{n})}{f'(x_{n})}, \\[0.8ex]
z_{n} & := y_{n} - \dfrac{f(y_n)}{f'(x_n)}
\cdot \dfrac{1}{\left(1-\frac{f(y_n)}{f(x_n)}\right)^2}, \\[0.8ex]
x_{n+1} & := z_{n} - \dfrac{f(z_n)}{f'(x_n)}
\cdot \dfrac{1}{\left(1-H(t_n)-J(s_n)-P(u_n)\right)^2}
\end{aligned}
\right.
\end{equation}
with weight functions
\begin{equation*}
H(t_n) = -\beta-\gamma+t_n+\frac{t_n^2}{2}-\frac{t_n^3}{2}, \quad
J(s_n) = \beta+\frac{s_n}{2}, \quad P(u_n) = \gamma+\frac{u_n}{2},
\end{equation*}
where $t_n = \frac{f(y_n)}{f(x_n)}$, $s_n = \frac{f(z_n)}{f(x_n)}$,
$u_n = \frac{f(z_n)}{f(y_n)}$, and $\beta, \gamma \in \mathbb{R}$.
Note that the parameters $\beta$ and $\gamma$ cancel when used
in~\eqref{o1}. Hence, their choice has no contribution to the method.

\paragraph{Method 3:}
The method by B. Neta~\cite{Neta0}, see also~\cite[formula~(9)]{Neta1}, is
given by
\begin{equation}
\label{o2}
\left\{
\begin{aligned}
y_{n} & := x_{n} - \dfrac{f(x_n)}{f'(x_n)}, \\[0.8ex]
z_{n} & := y_{n} - \dfrac{f(x_n)+Af(y_n)}{f(x_n)+(A-2)f(y_n)} \cdot
\dfrac{f(y_n)}{f'(x_n)},
\quad A\in \mathbb{R}, \\[0.8ex]
x_{n+1} & := y_n + \delta_1f^2(x_n)+\delta_2f^3(x_n),
\end{aligned}
\right.
\end{equation}
where
\begin{alignat*}{2}
F_y & = f(y_n)-f(x_n), &\qquad F_z &= f(z_n)-f(x_n),\\
\zeta_y & = \dfrac{1}{F_y}
\left(\dfrac{y_n-x_n}{F_y}-\dfrac{1}{f'(x_n)}\right),
&\qquad
\zeta_z & = \dfrac{1}{F_z}
\left(\dfrac{z_n-x_n}{F_z}-\dfrac{1}{f'(x_n)}\right),\\
\delta_2 & = -\dfrac{\zeta_y-\zeta_z}{F_y-F_z},
&\qquad
\delta_1 & = \zeta_y+\delta_2F_y.
\end{alignat*}
We will use $A = 0$ in the numerical experiments of this paper.

\paragraph{Method 4:}
The Sharma and Sharma method \cite{Sharma1} is given by
\begin{equation}
\label{o3}
\left\{
\begin{aligned}
y_{n} & := x_{n} - \dfrac{f(x_n)}{f'(x_n)}, \\[0.8ex]
z_{n} & := y_{n} - \dfrac{f(y_n)}{f'(x_n)} \cdot \dfrac{f(x_n)}{f(x_n)-2f(y_n)}, \\[0.8ex]
x_{n+1} & := z_{n} - \dfrac{f[x_n,y_n]f(z_n)}{f[x_n,z_n]f[y_n,z_n]}\,W(t_n),
\end{aligned}
\right.
\end{equation}
with the weight function
\begin{equation*}
W(t_n) = 1+\frac{t_n}{1+\alpha t_n},
\quad \alpha\in \mathbb{R},
\end{equation*}
and $t_n = \frac{f(z_n)}{f(x_n)}$.
We will use $\alpha=1$ in the numerical experiments of this paper.

\paragraph{Method 5:}
The method from Babajee, Cordero, Soleymani and Torregrosa \cite{Babajee}
is given by
\begin{equation}
\label{o4}
\left\{
\begin{aligned}
y_{n} & := x_{n} - \dfrac{f(x_n)}{f'(x_n)}
\left( 1 + \left(\dfrac{f(x_n)}{f'(x_n)}\right)^5 \right), \\[0.8ex]
z_{n} & := y_{n} - \dfrac{f(y_n)}{f'(x_n)}
\left( 1 - \dfrac{f(y_n)}{f(x_n)}\right)^{-2}, \\[0.8ex]
x_{n+1} & := z_n - \dfrac{f(z_n)}{f'(x_n)}
\cdot \dfrac{1+\left(\dfrac{f(y_n)}{f(x_n)}\right)^2 + 5\left(\dfrac{f(y_n)}{f(x_n)}\right)^4
+ \dfrac{f(z_n)}{f(y_n)}}
{\left( 1 - \dfrac{f(y_n)}{f(x_n)} - \dfrac{f(z_n)}{f(x_n)} \right)^2} .
\end{aligned}
\right.
\end{equation}

\paragraph{Method 6:}
The method from Thukral and Petkovi\'{c} \cite{Thukral} is given by
\begin{equation}
\label{o5}
\left\{
\begin{aligned}
y_{n} & := x_{n} - \dfrac{f(x_n)}{f'(x_n)}, \\[0.8ex]
z_{n} & := y_{n} - \dfrac{f(y_n)}{f'(x_n)}\cdot
\dfrac{f(x_n)+\beta f(y_n)}{f(x_n)+(\beta-2)f(y_n)}, \qquad \beta\in \mathbb{R}, \\[0.8ex]
x_{n+1} & := z_n - \dfrac{f(z_n)}{f'(x_n)} \cdot
\left(\varphi(t_n)+\psi(s_n)+\omega(u_n)\right),
\end{aligned}
\right.
\end{equation}
where weight functions are
\begin{equation*}
\varphi(t_n) = \left(1+\dfrac{t_n}{1-2t_n}\right)^2,
\qquad \psi(s_n) = \dfrac{s_n}{1-\alpha s_n},
\quad \alpha \in\mathbb{R},\qquad \omega(u_n)=4u_n,
\end{equation*}
and $t_n=\frac{f(y_n)}{f(x_n)}$, $s_n=\frac{f(z_n)}{f(y_n)}$
and $u_n=\frac{f(z_n)}{f(x_n)}$. We will use $\beta=0$ and
$\alpha=1$ in the numerical experiments of this paper.

\section{Numerical examples}
\label{sec:examples}

The new three-point method~\eqref{a1} is tested on several
nonlinear equations. To obtain high accuracy and avoid the loss of
significant digits, we employed multi-precision arithmetic with
20\,000 significant decimal digits in the programming package
Mathematica.

We are going to perform numerical experiments with the four test functions
$f_j$, $j=1,\dots,4$, which appear in Table~\ref{table1}. We are going to
reach the given root $x^{\ast}$ starting with the mentioned $x_0$ for the
four functions and the six methods of convergence order eight.

\begin{table}[htb!]
\begin{center}
\begin{tabular}{lcc}
   \toprule
    test function $f_j$ & root $x^{\ast}$ & initial guess $x_0$
  \\ \midrule
    $f_1(x) = \ln (1+x^2)+e^{x^2-3x}\sin x$ & $0$ & $0.35$
  \\[0.5ex]
  $f_2(x) = 1+e^{2+x-x^2}+x^3-\cos(1+x)$ & $-1$ & $-0.3$
  \\[0.5ex]
   $f_3(x) = (1+x^2)\cos\frac{\pi x}{2}+\frac{\ln(x^2+2x+2)}{1+x^2}$ & $-1$ & $-1.1$
  \\[0.5ex]
   $f_4(x) = x^4+\sin\frac{\pi}{x^2}-5$ & $\sqrt{2}$ & $1.5$
  \\[0.5ex]
  \bottomrule
\end{tabular}
\caption{Test functions $f_1, \dots, f_4$, root
$x^{\ast}$, and initial guess $x_0$.}
\label{table1}
\end{center}
\end{table}

In order to test our proposed method~\eqref{a1} and
compare it with the methods \eqref{o1}--\eqref{o5},
we compute the error, the computational
order of convergence (COC) by the approximate formula~\cite{coc}
\begin{equation}
\label{coc}
\textup{COC} \approx \frac{\ln|(x_{n+1}-x^{\ast})/(x_{n}-x^{*})|}{\ln|(x_{n}-x^{*})/(x_{n-1}-x^{*})|},
\end{equation}
and the approximated computational order of convergence (ACOC) by
the formula~\cite{acoc}
\begin{equation}
\label{acoc}
\textup{ACOC} \approx \frac{\ln|(x_{n+1}-x_{n})/(x_{n}-x_{n-1})|}{\ln|(x_{n}-x_{n-1})/(x_{n-1}-x_{n-2})|}.
\end{equation}

It is worth noting that COC has been used in the recent years.
Nevertheless, ACOC is more practical because it does not require to know the
root~$x^{\ast}$. See~\cite{acoc-et-al} for a comparison among several
convergence orders. Note that these formulas may result for particular
examples in convergence orders which are higher than expected. The reason
is that the error equation~\eqref{erreq} contains problem-dependent
coefficients which may vanish for some nonlinear functions $f$. However,
the formulas~\eqref{coc} and~\eqref{acoc} will provide for a ``random''
example good approximations for the convergence order of the method.


We have used both COC and ACOC to check the accuracy of the considered
methods. Note that both COC and ACOC give already for small values of $n$
good experimental approximations to convergence order.


The comparison of our method~\eqref{a1} with the methods
\eqref{o1}--\eqref{o5} applied to the four nonlinear equations $f_j(x)=0$,
$j=1,\dots,4,$ are presented in in Table~\ref{table2-3}. We
abbreviate~\eqref{a1} by M1 and \eqref{o1}--\eqref{o5} as M2--M6,
respectively. The computational convergence order COC and ACOC are given
$n=3$. Note that they are for all problems and methods in excellent with
the theoretical order of convergence.

\begin{table}[htb!]\small
\begin{center}
\begin{tabular}{cllllll}
 \toprule
  & \multicolumn{1}{c}{M1} & \multicolumn{1}{c}{M2}
  & \multicolumn{1}{c}{M3} & \multicolumn{1}{c}{M4} & \multicolumn{1}{c}{M5}& \multicolumn{1}{c}{M6} \\
\midrule
$f_1$, $x_0=0.35$\\
$|x_{1}-x^{\ast}|$ & $0.610\mathrm{e}{-}6$ & $0.721\mathrm{e}{-}4$
& $0.893\mathrm{e}{-}4$ & $0.753\mathrm{e}{-}4$ & $0.347\mathrm{e}{-}3$& $0.328\mathrm{e}{-}3$ \\
$|x_{2}-x^{\ast}|$ & $0.319\mathrm{e}{-}46$& $0.230\mathrm{e}{-}30$
& $0.126\mathrm{e}{-}30$ & $0.619\mathrm{e}{-}31$ & $0.471\mathrm{e}{-}25$ & $0.256\mathrm{e}{-}25$\\
$|x_{3}-x^{\ast}|$ & $0.179\mathrm{e}{-}368$ &
  $0.252\mathrm{e}{-}242$
& $0.200\mathrm{e}{-}245$ & $0.128\mathrm{e}{-}247$ & $0.546\mathrm{e}{-}200$& $0.345\mathrm{e}{-}202$ \\
COC &  $8.0000$ & $8.0000$ & $8.0000$ & $8.0000$ & $8.0000$ & $8.0000$\\
ACOC & $8.0000$ & $7.9999$ & $7.9999$ & $7.9999$ & $7.9999$ & $7.9999$\\
\midrule
$f_2$, $x_0=-0.3$\\
$|x_{1}-x^{\ast}|$ & $0.248\mathrm{e}{-}3$ & $0.157\mathrm{e}{-}3$
& $0.763\mathrm{e}{-}4$ & $0.871\mathrm{e}{-}4$ & $0.411\mathrm{e}{-}3$ & $0.273\mathrm{e}{-}4$\\
$|x_{2}-x^{\ast}|$ & $0.582\mathrm{e}{-}32$& $0.119\mathrm{e}{-}33$
& $0.540\mathrm{e}{-}35$ & $0.134\mathrm{e}{-}34$ & $0.377\mathrm{e}{-}29$ & $0.321\mathrm{e}{-}38$\\
$|x_{3}-x^{\ast}|$ & $0.532\mathrm{e}{-}261$ &
 $0.138\mathrm{e}{-}274$
& $0.342\mathrm{e}{-}284$ & $0.438\mathrm{e}{-}281$ & $0.189\mathrm{e}{-}237$& $0.117\mathrm{e}{-}309$ \\
COC &  $8.0000$ &  $8.0000$ & $8.0000$ & $8.0000$ & $8.0000$& $8.0000$ \\
ACOC & $8.0000$ & $7.9998$ & $7.9999$ & $7.9999$ & $7.9999$& $7.9999$ \\
\midrule
$f_3$, $x_0=-1.1$\\
$|x_{1}-x^{\ast}|$ & $0.106\mathrm{e}{-}7$ & $0.614\mathrm{e}{-}8$
& $0.388\mathrm{e}{-}8$ & $0.175\mathrm{e}{-}8$ & $0.554\mathrm{e}{-}8$ & $0.100\mathrm{e}{-}7$  \\
$|x_{2}-x^{\ast}|$ & $0.482\mathrm{e}{-}63$ & $0.328\mathrm{e}{-}65$
& $0.254\mathrm{e}{-}67$ & $0.154\mathrm{e}{-}70$ & $0.426\mathrm{e}{-}66$& $0.136\mathrm{e}{-}63$ \\
$|x_{3}-x^{\ast}|$ & $0.833\mathrm{e}{-}506$ &
 $0.217\mathrm{e}{-}523$
& $0.877\mathrm{e}{-}541$ & $0.582\mathrm{e}{-}567$ & $0.528\mathrm{e}{-}531$ & $0.154\mathrm{e}{-}510$\\
COC &  $8.0000$  & $8.0000$ & $8.0000$ & $8.0000$ & $8.0000$& $8.0000$ \\
ACOC & $7.9999$  & $8.0000$ & $7.9999$ & $8.0000$ & $8.0000$& $7.9999$ \\
\midrule
$f_4$, $x_0=1.5$\\
$|x_{1}-x^{\ast}|$ & $0.148\mathrm{e}{-}7$ & $0.433\mathrm{e}{-}8$
& $0.327\mathrm{e}{-}10$ & $0.642\mathrm{e}{-}10$ & $0.281\mathrm{e}{-}8$ & $0.727\mathrm{e}{-}10$ \\
$|x_{2}-x^{\ast}|$ & $0.138\mathrm{e}{-}61$& $0.134\mathrm{e}{-}66$
& $0.369\mathrm{e}{-}84$ & $0.101\mathrm{e}{-}81$ & $0.341\mathrm{e}{-}68$ & $0.543\mathrm{e}{-}81$ \\
$|x_{3}-x^{\ast}|$ & $0.769\mathrm{e}{-}494$ &
 $0.116\mathrm{e}{-}534$
& $0.967\mathrm{e}{-}676$ & $0.389\mathrm{e}{-}656$ & $0.161\mathrm{e}{-}547$& $0.530\mathrm{e}{-}650$ \\
COC &  $8.0000$  & $8.0000$ & $8.0000$ & $8.0000$ & $8.0000$ & $8.0000$ \\
ACOC & $8.0000$ & $7.9999$ & $7.9999$ & $7.9999$ & $8.0000$ & $7.9999$\\
\bottomrule
\end{tabular}
\caption{Errors, COC, and ACOC for the iterative methods
\eqref{a1} and \eqref{o1}--\eqref{o5} (abbreviated as M1--M6)
applied to the find the root of test functions $f_1,\dots,f_4$
given in Table~\ref{table1}.} \label{table2-3}
\end{center}
\end{table}


\section{Dynamic behavior}
\label{sec:dynamic}

We have already observed that all methods converge if the initial
guess is chosen suitably. We now investigate the regions where the
initial point has to chosen in order to achieve the root. In other
words, we will numerically approximate the domain of attraction of
the zeros as a qualitative measure of how the method depends on
the choice of the initial approximation of the root. To answer
this important question on the dynamical behavior of the
algorithms, we will investigate the dynamics of the new
method~\eqref{a1} and compare it with the methods
\eqref{o1}--\eqref{o5}.

Let's recall some basic concepts such as basin of attraction. For
more details and many other examples of the study of the dynamic
behavior of iterative methods, one can consult~\cite{Amat5,
Babajee, Chicharro, Ezquerro2, Ezquerro1, Fer, GutiMaVar, Paricio,
Stewart, Varona}.

Let $Q:\mathbb{C} \to \mathbb{C}$ be a rational map on the
complex plane. For $z\in \mathbb{C} $, we define its orbit as the
set $\operatorname{orb}(z) = \{z,\,Q(z),\,Q^2(z),\dots\}$. The convergence
$\operatorname{orb}(z)\to z^{\ast}$ is understood in the sense
$\lim\limits_{k\to\infty} Q^k(z) = z^{\ast}$. A point $z_0 \in
\mathbb{C}$ is called periodic point with minimal period $m$ if
$Q^m(z_0) = z_0$ where $m$ is the smallest positive integer with this
property (and thus $\{z_0,Q(z_0),\dots, Q^{m-1}(z_0)\}$ is a cycle).
A periodic point with minimal period $1$ is called fixed
point.
Moreover, a periodic point $z_0$ with period $m$ is called attracting if
$|(Q^m)'(z_0)|<1$, repelling if $|(Q^m)'(z_0)|>1$, and neutral
otherwise. The Julia set of a nonlinear map $Q(z)$, denoted by
$J(Q)$, is the closure of the set of its repelling periodic
points. The complement of $J(Q)$ is the Fatou set $F(Q)$.

The six methods \eqref{a1} and \eqref{o1}--\eqref{o5}
provide iterative rational maps $Q$ when they are applied to
find roots of complex polynomials $p$. In particular, we
are interested in the basins of attraction of the roots of the
polynomials where the basin of attraction of a root $z^{\ast}$ is the
complex set
$\{ z_0 \in \mathbb{C} : \operatorname{orb}(z_0)  \to z^{\ast} \}$.
It is well known that the basins of attraction of the
different roots lie in the Fatou set $F(Q)$. The Julia set $J(Q)$
is, in general, a fractal and the rational map $Q$ is unstable there.

For the dynamical and graphical point of view, we take a
$600 \times 600$ grid of the square $[-3,3]\times[-3,3] \subset \mathbb{C}$
and assign a color to each point $z_0\in D$ according to the simple
root to which the corresponding orbit of the iterative method
starting from $z_0$ converges, and we mark the point as black if
the orbit does not converge to a root in the sense that after at
most $15$ iterations it has a distance to any of the roots which
is larger than $10^{-3}$. We have used only $15$ iterations because
we are using methods of convergence order eight which, if they
converge, do this very fast. The basins of attraction are distinguished
by their color.

\begin{table}[htb!]
\centering
\begin{tabular}{l @{\qquad} c}
   \toprule
    Test polynomials & Roots
   \\
   \midrule
   $p_1(z) = z^2-1$ & $1,\quad -1$
  \\[0.5ex]
   $p_2(z) = z^3-z $ & $0,\quad 1,\quad -1$
  \\[0.5ex]
   $p_3(z) = z(z^2+1)(z^2+4) $ & $0, \quad 2i, \quad -2i,\quad i, \quad -i$
  \\[0.5ex]
   $p_4(z) = (z^4-1)(z^2+2i)$ & $1, \quad i, \quad -1,\quad -i, \quad -1+i, \quad 1-i$
  \\[0.5ex]
   $p_5(z) = z^7-1$ & $e^{2k\pi i/7}, \qquad k=0,\dots,6$
  \\[0.5ex]
   $p_6(z) = (10z^5-1)(z^5+10)$ & ${\big(\tfrac{1}{10}\big)}^{1/5} e^{2k\pi i/5},
         \quad (-10)^{1/5} e^{2k\pi i/5}, \qquad k=0,\dots,4$
  \\[0.5ex]
  \bottomrule
\end{tabular}
\caption{Test polynomials $p_1(z),\dots, p_6(z)$ and their roots.}
\label{table4}
\end{table}

Different colors are used for different roots. In the basins of attraction,
the number of iterations needed to achieve the root is shown by the
brightness. Brighter color means less iteration steps. Note that black
color denotes lack of convergence to any of the roots. This happens, in
particular, when the method converges to a fixed point that is not a root or
if it ends in a periodic cycle or at infinity. Actually and although we have
not done it in this paper, infinity can be considered an ordinary point if
we consider the Riemann sphere instead of the complex plane. In this case,
we can assign a new ``ordinary color'' for the basin of attraction of
infinity. Details for this idea can be found in~\cite{Paricio}.

\begin{figure}
\centering
\includegraphics[width=0.9\textwidth]{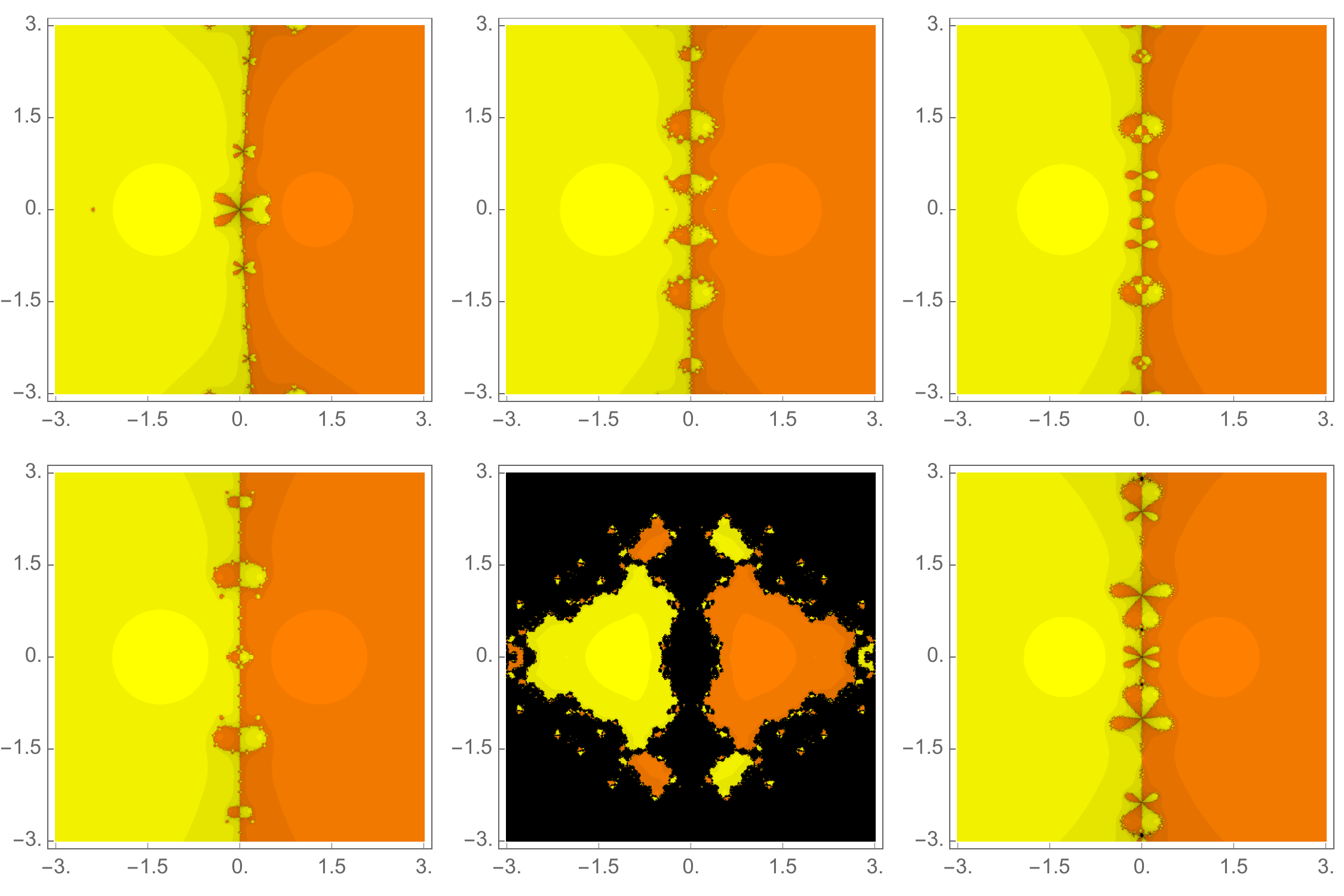}
\caption{Comparison of basins of attraction of methods \eqref{a1}
and \eqref{o1}--\eqref{o5} for the test problem $p_1(z)=
z^2-1=0$.} \label{fig:figure1}
\end{figure}

\begin{figure}
\centering
\includegraphics[width=0.9\textwidth]{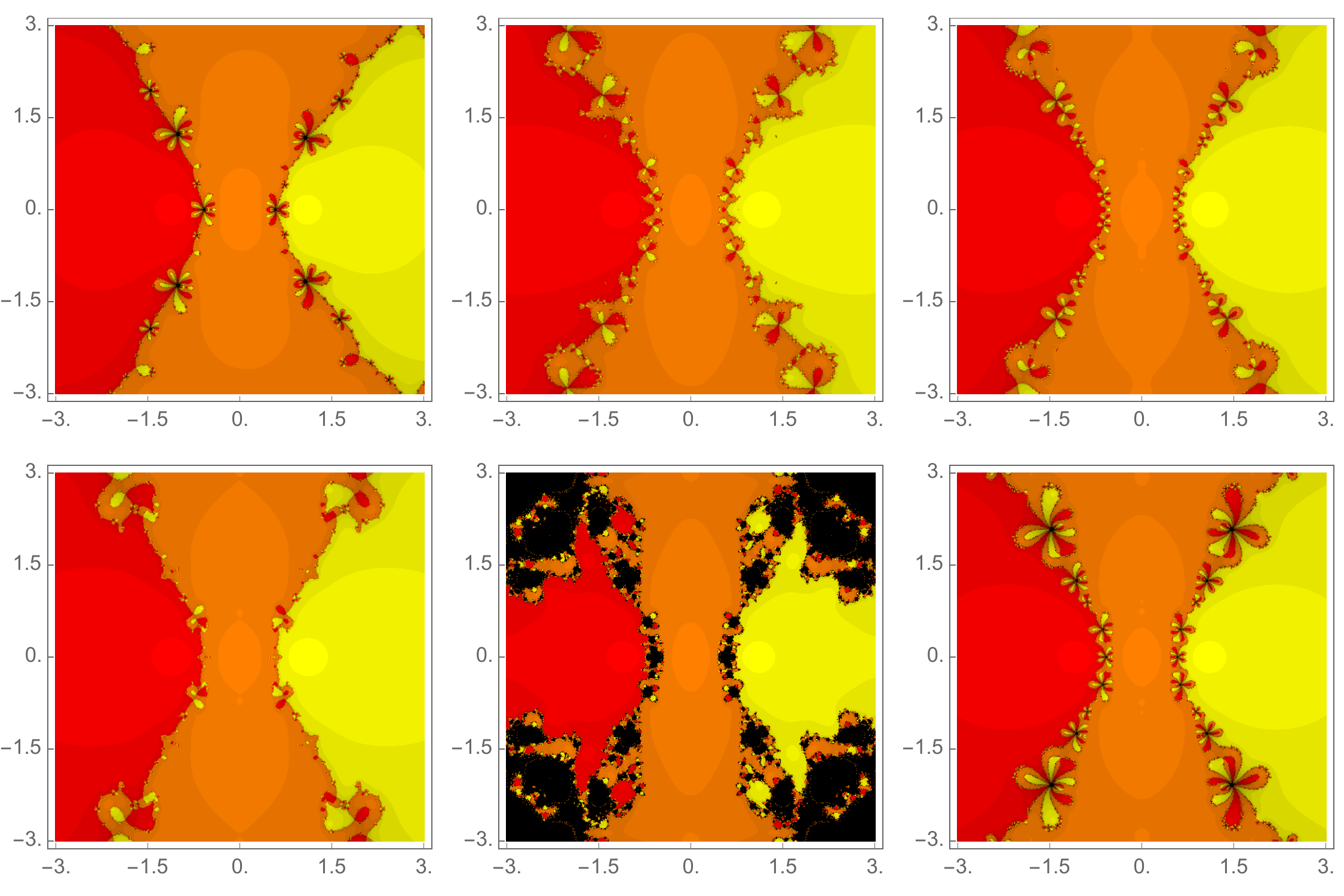}
\caption{Comparison of basins of attraction of methods \eqref{a1}
and \eqref{o1}--\eqref{o5} for the test problem $p_2(z)=
z^3-z=0$.} \label{fig:figure2}
\end{figure}

Basins of attraction for the six methods \eqref{a1} and
\eqref{o1}--\eqref{o5} for the six test problems $p_i(z)=0$,
$i=1,\dots,6$, are illustrated in
Figures~\ref{fig:figure1}--\ref{fig:figure6} from left to right
and from top to bottom.

\begin{figure}
\centering
\includegraphics[width=0.9\textwidth]{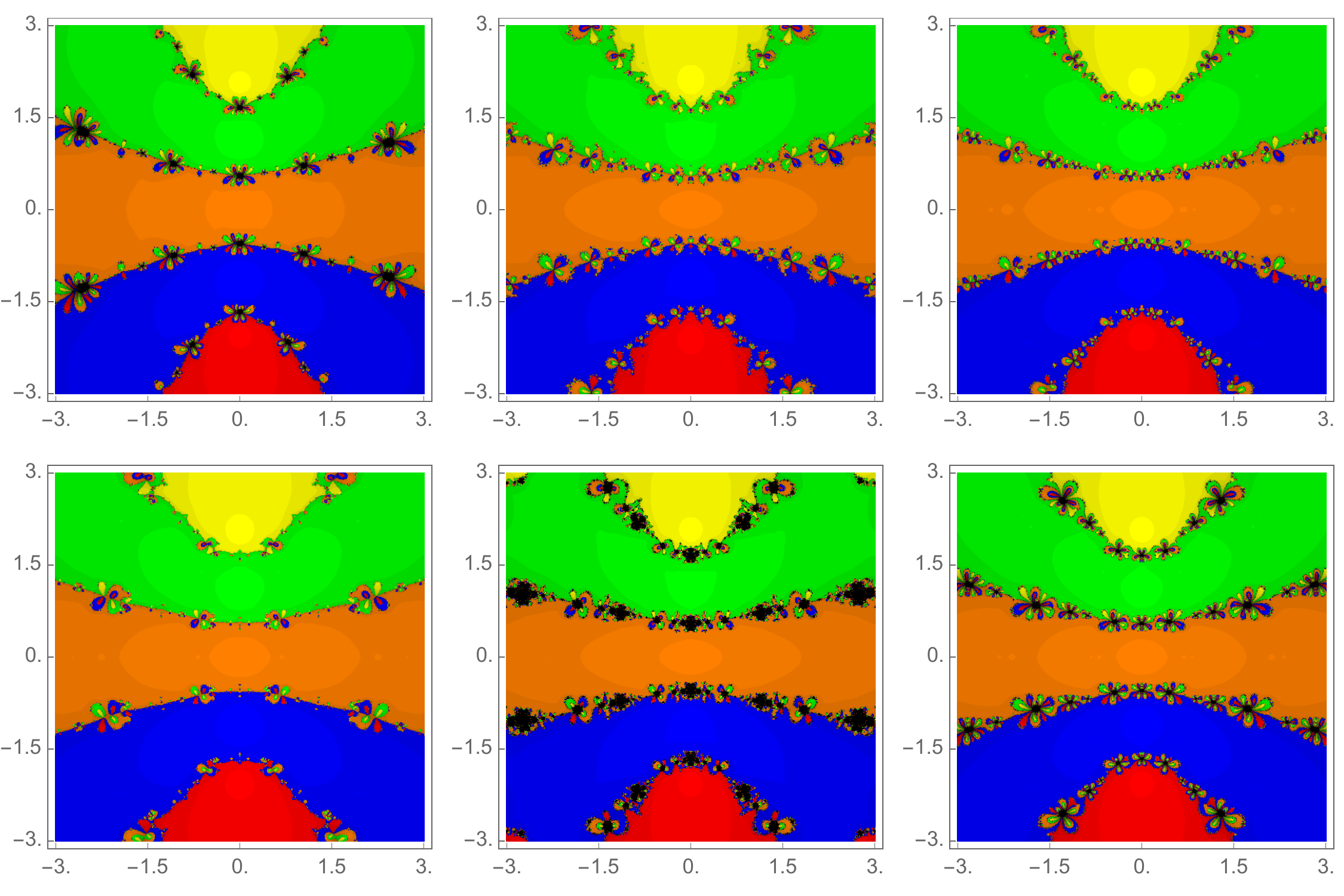}
\caption{Comparison of basins of attraction of methods \eqref{a1}
and \eqref{o1}--\eqref{o5} for the test problem $p_3(z)=
z(z^2+1)(z^2+4)=0$.} \label{fig:figure3}
\end{figure}

\begin{figure}
\centering
\includegraphics[width=0.9\textwidth]{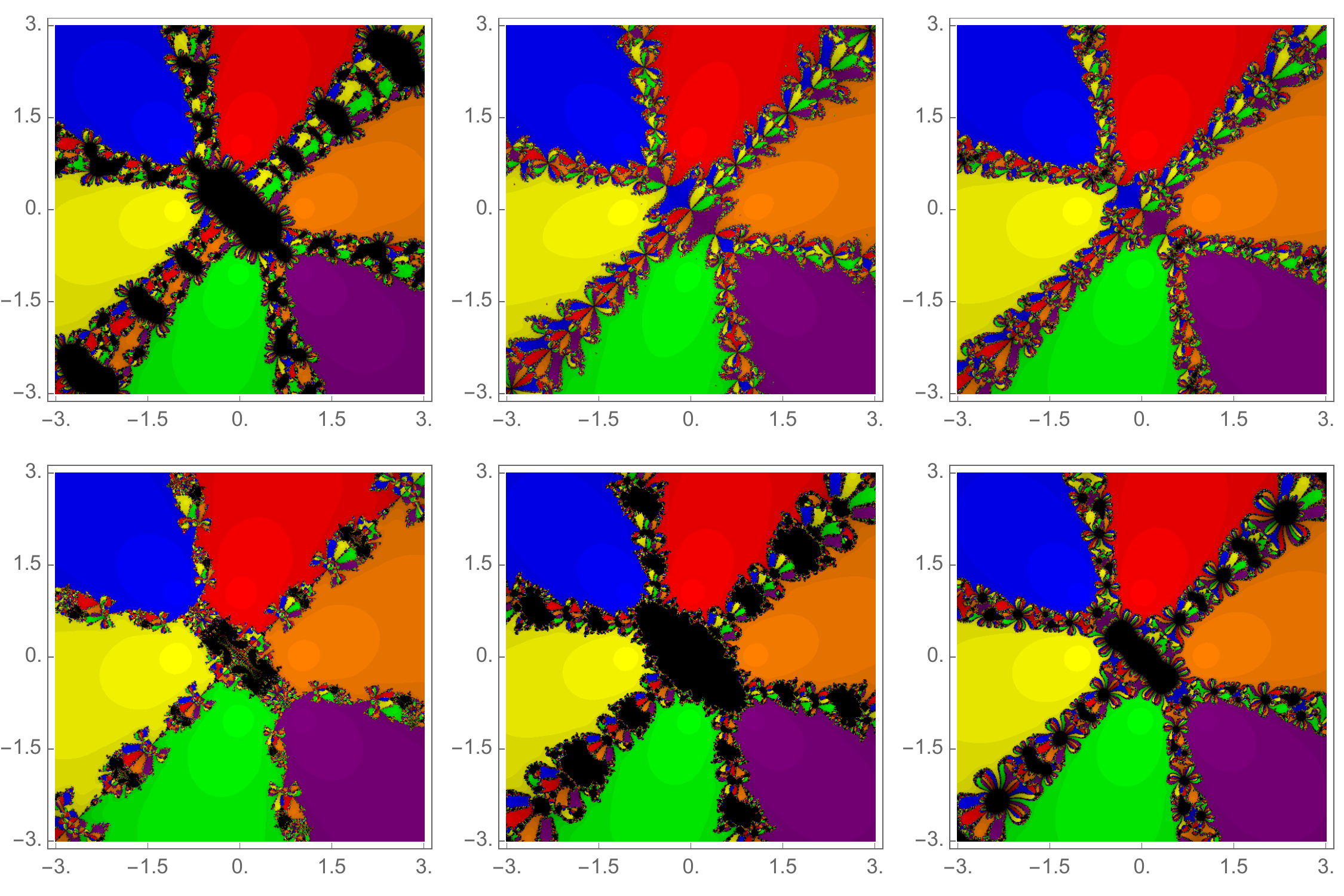}
\caption{Comparison of basins of attraction of methods \eqref{a1}
and \eqref{o1}--\eqref{o5} for the test problem $p_4(z)=
(z^4-1)(z^2+2i)=0$.} \label{fig:figure4}
\end{figure}

From the pictures, we can easily judge the behavior and suitability
of any method depending on the circumstances.
If we choose an initial point $z_0$
in a zone where different basins of attraction touch each other, it is
impossible to predict which root is going to be reached by the
iterative method that starts in~$z_0$. Hence, $z_0$ is
not a good choice. Both the black zones and the zones with a lot
of colors are not suitable for choosing the initial guess $z_0$ if
precise root should be reached. Although the most attractive pictures appear
when we have very intricate frontiers between basins of attraction, they
correspond to the cases where the dynamic behavior of the method is more
unpredictable and the method is more demanding with respect to choice of
the initial point.

\begin{figure}
\centering
\includegraphics[width=0.9\textwidth]{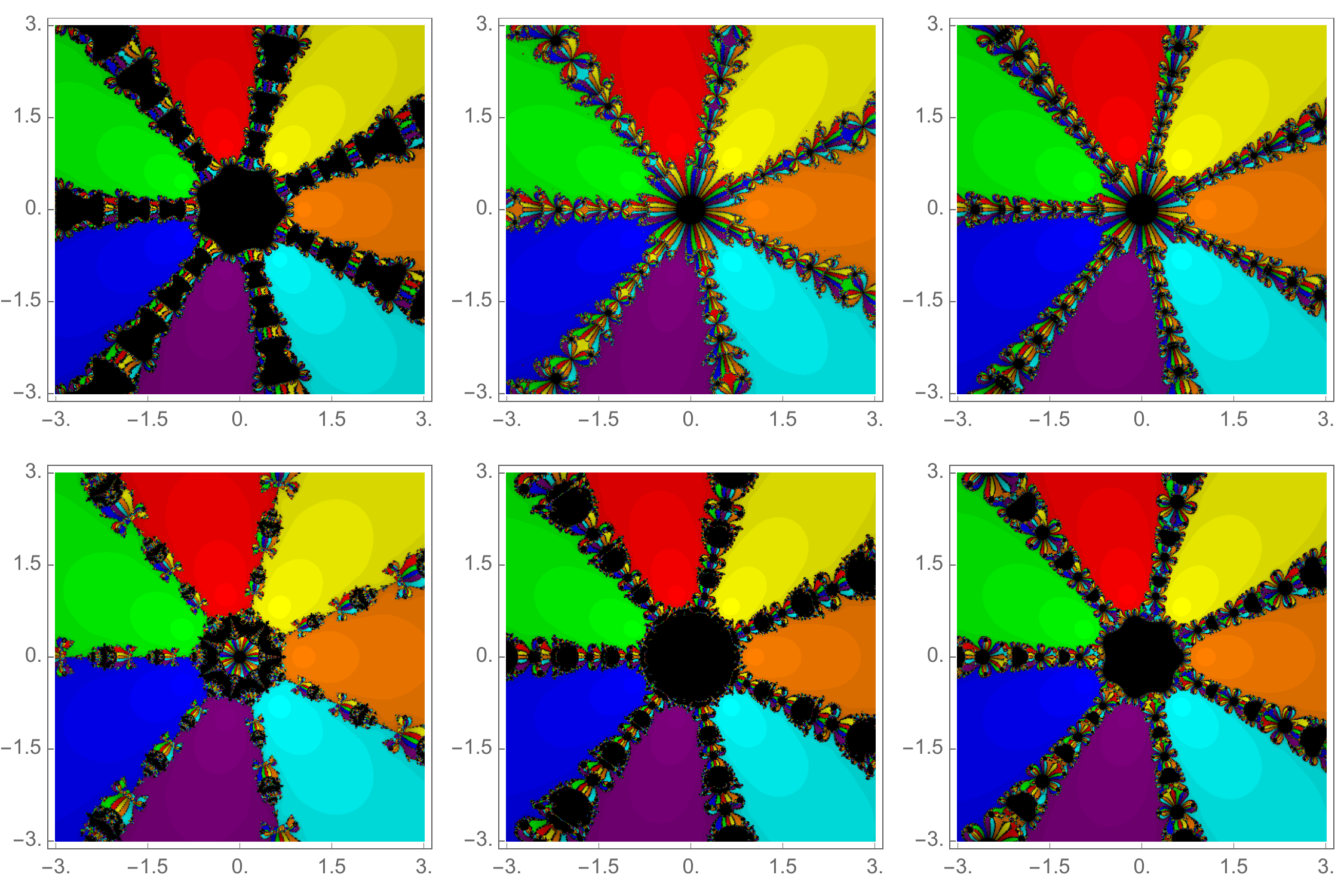}
\caption{Comparison of basins of attraction of methods \eqref{a1}
and \eqref{o1}--\eqref{o5} for the test problem $p_5(z)=
z^7-1=0$.} \label{fig:figure5}
\end{figure}

\begin{figure}
\centering
\includegraphics[width=0.9\textwidth]{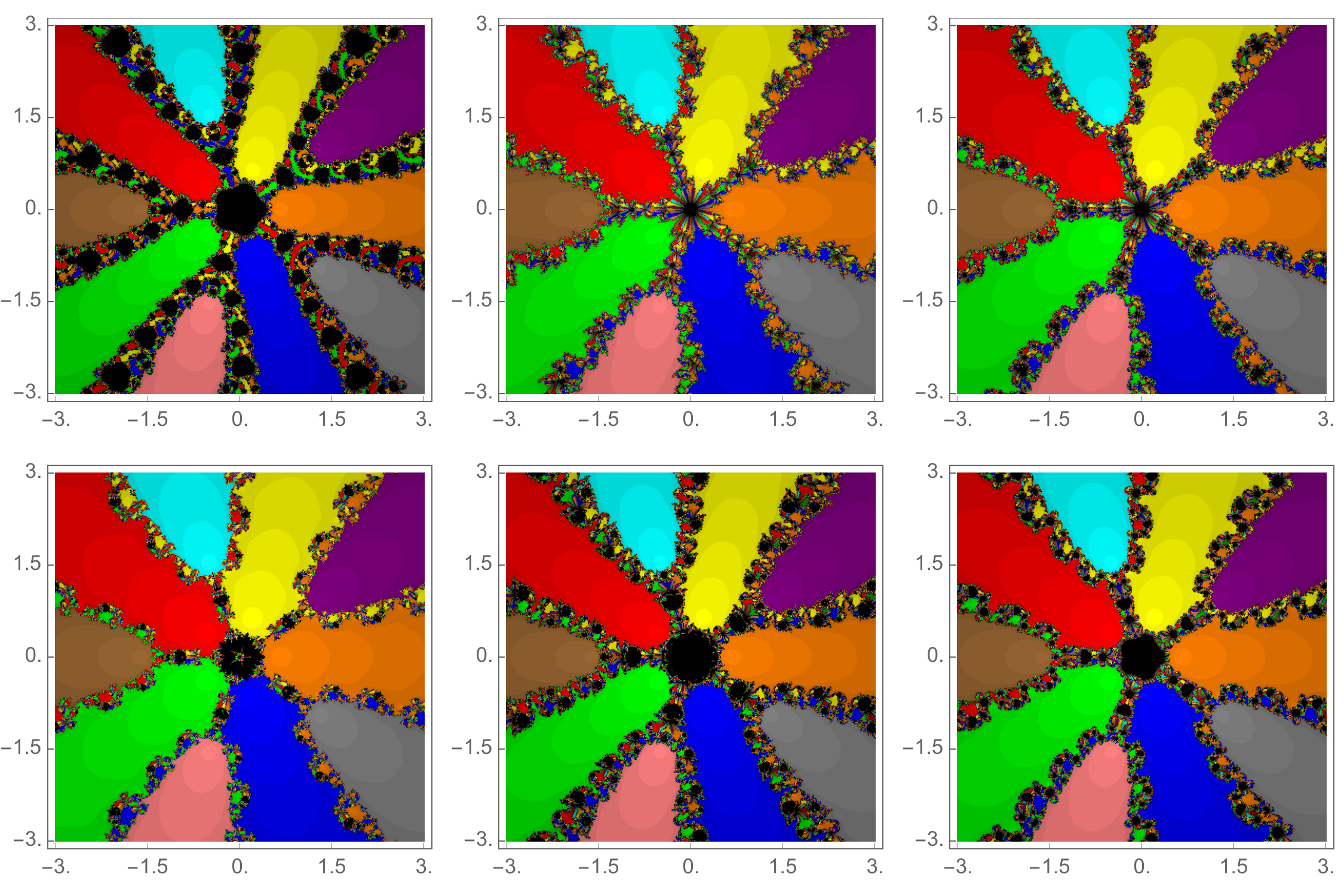}
\caption{Comparison of basins of attraction of methods \eqref{a1}
and \eqref{o1}--\eqref{o5} for the test problem $p_6(z)=
(10z^5-1)(z^5+10)=0$.} \label{fig:figure6}
\end{figure}

Finally, we have included in Table~\ref{table5} the results of
some numerical experiments to measure the behavior of the five
iterative methods \eqref{a1} and \eqref{o1}--\eqref{o5} in finding
the roots of the test polynomials $p_j$, $j=1,\dots,6$. To
compute the data of this table, we have applied the six methods to
the six polynomials, starting at an initial points $z_0$ on a $600
\times 600$ grid in the rectangle $[-3,3] \times [-3,3]$ of the
complex plane. The same way was used in
Figures~\ref{fig:figure1}--\ref{fig:figure6} to show the basins of
attraction of the roots. In particular, we decide again that an
initial point $z_0$ has reached a root $z^{\ast}$ when its distance to
$z^{\ast}$ is less than $10^{-3}$ (in this case $z_0$ is in the basin
of attraction of~$z^{\ast}$) and we decide that the method starting in
$z_0$ diverges when no root is found in a maximum of $15$
iterations of the method. We say in this case that $z_0$ is a
``nonconvergent point''. In Table~\ref{table5}, we have
abbreviated the methods \eqref{a1} and \eqref{o1}--\eqref{o5} as
M1--M6, respectively. The column I/P shows the mean of iterations
per point until the algorithm decides that a root has been reached
or the point is declared nonconvergent. The column NC shows the
percentage of nonconvergent points, indicated as black zones in
the pictures of Figures~\ref{fig:figure1}--\ref{fig:figure6}.
It is clear that the nonconvergent points have a
great influence on the values of I/P since these points contribute
always with the maximum number of $15$ allowed iterations. In
contrast, ``convergent points'' are reached usually very fast due
to the fact that we are dealing with methods of order eight. To
reduce the effect of nonconvergent points, we have included the
column I$_\textrm{C}$/C which shows the mean number of iterations
per convergent point. If we use either the columns I/P or the
column I$_\textrm{C}$/C to compare the performance of the
iterative methods, we clearly obtain different conclusions.


\begin{table}[htbp!]
\begin{center}
\begin{tabular}{cclll}
\toprule
Polynomial & Method & I/P & NC (\%) & I$_\textrm{C}$/C \\
\midrule
$p_1(z)$ & M1 & 2.21 & 0.00111 & 2.21 \\
         & M2 & 2.19 & 0. & 2.19 \\
         & M3 & 2.16 & 0. & 2.16 \\
         & M4 & 2.11 & 0. & 2.11 \\
         & M5 & 6.01 & 71.0 & 2.09 \\
         & M6 & 2.30 & 0.0256 & 2.30 \\
\midrule
$p_2(z)$ & M1 & 2.90 & 0.125 & 2.89 \\
         & M2 & 2.88 & 0.00111 & 2.88 \\
         & M3 & 2.82 & 0.00444 & 2.82 \\
         & M4 & 2.73 & 0. & 2.73 \\
         & M5 & 4.32 & 27.5 & 2.81 \\
         & M6 & 3.21 & 0.216 & 3.18 \\
\midrule
$p_3(z)$ & M1 & 3.22 & 0.802 & 3.13 \\
         & M2 & 2.99 & 0.0178 & 2.99 \\
         & M3 & 2.94 & 0.0367 & 2.94 \\
         & M4 & 2.82 & 0. & 2.82 \\
         & M5 & 3.28 & 5.47 & 2.99 \\
         & M6 & 3.42 & 1.08 & 3.30 \\
\bottomrule
\end{tabular}%
\quad
\begin{tabular}{cclll}
\toprule
Polynomial & Method & I/P & NC (\%) & I$_\textrm{C}$/C \\
\midrule
$p_4(z)$ & M1 & 6.00 & 17.7 & 4.06 \\
         & M2 & 4.06 & 0.819 & 3.97 \\
         & M3 & 4.21 & 1.82 & 4.01 \\
         & M4 & 3.95 & 4.40 & 3.44 \\
         & M5 & 4.44 & 20.0 & 3.57 \\
         & M6 & 5.17 & 9.35 & 4.15 \\
\midrule
$p_5(z)$ & M1 & 6.89 & 24.4 & 4.27 \\
         & M2 & 4.81 & 3.33 & 4.46 \\
         & M3 & 5.07 & 5.70 & 4.46 \\
         & M4 & 4.59 & 7.05 & 3.80 \\
         & M5 & 5.02 & 21.4 & 4.02 \\
         & M6 & 5.78 & 13.3 & 4.36 \\
\midrule
$p_6(z)$ & M1 & 6.72 & 18.2 & 4.88 \\
         & M2 & 4.68 & 2.29 & 4.44 \\
         & M3 & 4.89 & 4.04 & 4.46 \\
         & M4 & 4.44 & 3.96 & 4.00 \\
         & M5 & 5.26 & 11.8 & 4.71 \\
         & M6 & 5.45 & 8.49 & 4.56 \\
\bottomrule
\end{tabular}
\caption{Measures of convergence of the iterative methods
\eqref{a1} and \eqref{o1}--\eqref{o5} (abbreviated as M1--M6)
applied to find the roots of the polynomials $p_j(z)$,
$j=1,\dots,6$.} \label{table5}
\end{center}
\end{table}

\section{Conclusion}
\label{sec:conclusion}

We have introduced a new optimal three-point method without memory
for approximating a simple root of a given nonlinear equation
which use only four function evaluations each iteration and result
in a method of convergence order eight. Therefore, Kung and
Traub's conjecture is supported. Numerical examples and
comparisons with some existing eighth-order methods are included
and confirm the theoretical results. The numerical experience
suggests that the new method is a valuable alternative for solving
these problems and finding simple roots. We used the basins of
attraction for comparing the iterative algorithms and we have
included some tables with comparative results.

\paragraph*{Acknowledgments.}
The research of the fourth author is supported by grant
MTM2015-65888-C4-4 from DGI (Spanish Government).


\end{document}